\newtheorem{theorem}{Theorem}[section]
\newtheorem{proposition}{Proposition}[section]
\newtheorem{cor}{Corollary}[section]
\theoremstyle{definition}
\newtheorem*{remark}{Remark}
\newtheorem*{example}{Example}
\begin{document}

\title{Roots of unity in definite quaternion orders}


\author{\sc Luis Arenas-Carmona}


\newcommand\Q{\mathbb Q}
\newcommand\alge{\mathfrak{A}}
\newcommand\Da{\mathfrak{D}}
\newcommand\Ea{\mathfrak{E}}
\newcommand\Ha{\mathfrak{H}}
\newcommand\oink{\mathcal O}
\newcommand\matrici{\mathbb{M}}
\newcommand\Txi{\lceil}
\newcommand\ad{\mathbb{A}}
\newcommand\enteri{\mathbb Z}
\newcommand\finitum{\mathbb{F}}
\newcommand\bbmatrix[4]{\left(\begin{array}{cc}#1&#2\\#3&#4\end{array}\right)}

\maketitle

\begin{abstract}
A commutative order in a quaternion algebra is called selective if it is embeds into some, but not all, 
the maximal orders in the algebra.  It is known that a given quadratic order over a number field can be selective 
in at most one indefinite quaternion algebra. Here we prove that the order generated by a cubic root of unity is 
selective for any definite quaternion algebra over the rationals with a type number 3 or larger.  The proof extends to a few  other closely related orders.

\end{abstract}

\bigskip
\section{Introduction}

A commutative order $\Ha$ in a quaternion algebra $\alge$, over a finite field $K$, is said to be selective if it is contained in some, but not all, of the maximal
orders in the algebra. More generally, for a genus $\mathbb{O}$ of orders of maximal rank in $\alge$, a commutative order
$\Ha$ is called selective for $\mathbb{O}$ if it embeds in some orders in $\mathbb{O}$, but not in all of them. The  selective
orders, for the genus of maximal order in a quaternion algebra, were characterized by Friedman and Chindburg \cite{FriedmannQ}, provided that the algebra satisfied Eichler's condition, namely:
\begin{quote} For any finite set $T$ of places of $K$ containing the set $\infty(K)$ of archimedean places, Eichler's condition for $T$ is satisfied if there exist a place $\wp\in T$ such that $\alge_\wp$ is a matrix algebra.
If $T$ is omitted, it is assumed that $T=\infty(K)$.
\end{quote}
 Later several authors extended this characterization to Eichler orders \cite{Guo}, \cite{Chan}. B. Linowitz 
has given several criteria under which selectivity can be avoided, always assuming  Eichler's condition 
\cite{lino}. All these result are based on
the fact that, if Eichler's condition holds, every spinor genus of orders of maximal rank in $K$ contains a unique 
conjugacy
class, and representations of orders by spinor genera can be studied by purely local computations, using the 
machinery of
class field theory. In fact, the proportion of spinor genera representing a given suborder (commutative or not), 
is
frequently a rational number of the form $1/[F:K]$, for some explicit class field $F$, the representation field. 
It is often the
case that $F\subseteq L$ when $\Ha$ is a suborder contained on a subfield $L$ of $\alge$. This shows that, for any 
such order $\Ha$, the proportion of spinor genera in a genus representing $\Ha$ is $0$, $1$, or $1/2$.  
The field $F$ is also contained in the spinor class field of the genus \cite{spinor}. This shows that, 
if the Eichler's condition is satisfied, for any given genus only a finite number
of subfields can contain selective orders. On the other hand, we showed in \cite{continuity} that any given commutative
order $\Ha$ can be selective in some genera of at most one quaternion algebra $\alge(\Ha)$. If $\Ha$ spans the field 
$L=K(\sqrt d)$,  then $\alge(\Ha)$ is defined by the Hilbert symbol $$ \alge(\Ha)=\left(\frac{d,-1}K\right).$$

The preceding discussion implies that, when Eichler's condition is satisfied, selectivity is a rare phenomenon. The
purpose of this work is to provide some evidence that this might not be so for quaternion algebras 
failing to satisfy Eichler's condition, i.e., definite quaternion algebras. 
Here we concentrate mostly on the case where $K=\mathbb{Q}$ and $\Ha=\enteri[\omega]$, where $\omega$
is a primitive cubic root of unity, but this is
due to the fact that this specific order has a particularly nice characterization in terms of quotient graphs.     

\begin{theorem}
Let $\mathbb{O}$ be a genus of Eichler orders of odd level in a definite rational quaternion algebra $\alge$ that is unramified at $2$. Assume that $\mathbb{O}$ contains $n$ conjugacy classes of orders, then at most $\frac n2+1$ of them contain cubic roots of unity.
\end{theorem}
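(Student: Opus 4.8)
The plan is to translate the statement into a problem about the quotient of the Bruhat--Tits tree at the prime $2$. Since $\alge$ is split at $2$ and the level is prime to $2$, the local order at $2$ is maximal, so $\alge_2\cong\matrici_2(\mathbb{Q}_2)$ and its maximal orders are the vertices of the $(2+1)=3$-regular tree $\mathcal{T}_2$. Fixing an Eichler order $\oink$ in the genus and setting $\Gamma=\oink[1/2]^\times$ modulo its center, strong approximation for the norm-one group --- valid because $\alge$ is definite but \emph{split} at $2$, where the local norm-one group is noncompact --- identifies the conjugacy classes in $\mathbb{O}$ with the vertices of the finite quotient graph $G=\Gamma\backslash\mathcal{T}_2$, so $G$ has exactly $n$ vertices. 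The first observation is that a conjugacy class contains a primitive cubic root of unity if and only if the stabilizer $\Gamma_v$ of a lift $v$ of the corresponding vertex contains an element $\omega$ with $\omega^2+\omega+1=0$: such an $\omega$ is a unit integral everywhere, hence lies in $\Gamma$, and $\omega\in\oink_v$ is equivalent to $\omega$ fixing $v$ in $\mathcal{T}_2$.

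The decisive local computation is at $2$. Because $-3\equiv 5\pmod 8$, the field $\mathbb{Q}_2(\omega)=\mathbb{Q}_2(\sqrt{-3})$ is the \emph{unramified} quadratic extension of $\mathbb{Q}_2$, so $\omega$ generates a nonsplit unramified torus in $\matrici_2(\mathbb{Q}_2)$. I would check that such an $\omega$ fixes a \emph{unique} vertex $v$ of $\mathcal{T}_2$ and permutes its three neighbours in a single $3$-cycle --- here the arithmetic coincidence $2+1=3=\mathrm{ord}(\omega)$ is exactly what makes the order $\enteri[\omega]$ behave so well, and is the reason the hypotheses single out the prime $2$. Consequently, at a vertex $\bar v$ of $G$ whose stabilizer contains such an $\omega$, the three edges issuing from the lift $v$ are fused into a single edge--orbit, so $\bar v$ has degree $1$ in $G$; in other words, \emph{the classes containing cubic roots of unity are precisely the leaves of $G$.}

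It then remains to bound the number $a$ of leaves of a connected graph obtained as a quotient of the $3$-regular tree. Every vertex of $G$ has degree at most $3$, so a handshake count gives $2|E(G)|\le a+3(n-a)$, while connectivity gives $|E(G)|\ge n-1$; combining these yields $2(n-1)\le 3n-2a$, that is $a\le \frac n2+1$, which is the assertion. The point requiring genuine care --- and the step I expect to be the main obstacle --- is that this count is only valid if the single edge at a leaf is a \emph{genuine} edge and not a loop, since a loop contributes $2$ to the degree and would destroy the bound. To rule this out I would colour the vertices of $\mathcal{T}_2$ by the parity of the $2$-adic valuation of the determinant; an element of $\Gamma$ fixing a vertex necessarily preserves its colour, so $\Gamma_v$ can contain no colour-reversing element, and after replacing $\Gamma$ by its colour-preserving subgroup (an index-$1$-or-$2$ operation that, in the index-$2$ case, doubles $n$ and $a$ simultaneously, since the deck involution then acts freely on vertices and on the special vertices alike) the quotient graph is bipartite and loop-free. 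In that bipartite model the leaves are honest degree-$1$ vertices and the handshake argument applies verbatim, yielding $a\le \frac n2+1$ in all cases.
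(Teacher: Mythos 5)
Your strategy is essentially the paper's: identify the conjugacy classes in the genus with the vertices of a quotient of the Bruhat--Tits tree at $2$, show that a class containing a cubic root of unity gives an endpoint of that quotient (because $\enteri_2[\omega]$ is the maximal order of the unramified quadratic extension of $\Q_2$, so $\omega$ fixes exactly one vertex and cyclically permutes its three neighbours), and then run the handshake-plus-connectivity count. Your device for avoiding loops and inversions --- passing to the colour-preserving subgroup of index at most $2$, over which the quotient is honestly bipartite and the covering involution acts freely on vertices --- is a legitimate substitute for the paper's convention of taking the barycentric subdivision and tracking ``virtual'' endpoints (the paper's bookkeeping retains a bit more information, since virtual endpoints can only improve the bound, but for this theorem either works). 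Note also that you only need, and only really argue, the implication ``contains a cubic root $\Rightarrow$ endpoint''; the converse, which the paper establishes with a projective-representation argument, is not required for an upper bound.

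The genuine gap is your choice of $\Gamma$. You take $\Gamma$ to be $\oink[1/2]^{\times}$ modulo its centre, but the group whose orbits on the vertices of $\mathfrak{T}_2$ correspond to conjugacy classes in the genus is the full conjugation stabilizer $\mathrm{stab}_{\alge^*}(\Da[1/2])$, i.e.\ the normalizer of the $\enteri[1/2]$-order, and for Eichler orders in a definite algebra this normalizer is in general strictly larger than $\Q^*\Da[1/2]^{\times}$: it may contain Atkin--Lehner-type elements whose reduced norms involve the ramified primes or the primes dividing the level to odd exponent. With the smaller group your quotient graph classifies something closer to ideal classes than to isomorphism classes of orders, so it can have strictly more than $n$ vertices (this already happens for maximal orders when the class number exceeds the type number, e.g.\ for the algebra ramified at $\{37,\infty\}$). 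The inequality you actually prove --- the leaves of your finer graph number at most half its vertices plus one --- does not formally imply $r\le\frac n2+1$ for the true classifying graph, because the fibres of the covering from your graph onto it need not have constant cardinality: if the classes containing $\omega$ sat over small fibres and the remaining classes over large ones, your count would only yield $r\le dt+2$, where $t=n-r$ and $d$ is the index of $\Q^*\Da[1/2]^{\times}$ in the normalizer. The repair is simply to run the whole argument with the normalizer in place of the unit group; the cubic root of unity still lies in the vertex stabilizer, the colour-parity reduction still applies (normalizing elements act on the tree with a well-defined parity), and the count then gives the stated bound.
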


\begin{cor}
Let $\mathbb{O}$ be a genus of Eichler orders in a definite rational quaternion algebra $\alge$ that is unramified at $2$.
Then if $\enteri[\omega]$ is represented by $\mathbb{O}$, it is selective for $\mathbb{O}$, as soon as
$\mathbb{O}$ contains at least three conjugacy classes.
\end{cor}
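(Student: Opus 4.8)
The plan is to read the Corollary off from the Theorem by a short counting argument, after first reducing to the case of odd level. The starting observation is that an Eichler order $\oink$ in the genus contains a primitive cubic root of unity precisely when it has an element $x$ with $x^2+x+1=0$, i.e.\ precisely when $\enteri[\omega]\cong\enteri[x]/(x^2+x+1)$ embeds into $\oink$. Since conjugation by a unit carries one embedded copy of $\enteri[\omega]$ to another, ``containing a cubic root of unity'' is an invariant of the conjugacy class, so the quantity counted in the Theorem is exactly the number $m$ of conjugacy classes in $\mathbb{O}$ that represent $\enteri[\omega]$. By hypothesis $\enteri[\omega]$ is represented, so $m\ge 1$; and by the Theorem $m\le \frac n2+1$. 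The order is selective for $\mathbb{O}$ exactly when $1\le m\le n-1$, so it remains only to check that not every class represents it, i.e.\ that $m<n$. For $n\ge 3$ one has $\frac n2+1<n$, whence $m\le\frac n2+1<n$, and selectivity follows at once.

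The one point needing care is that the Theorem carries the hypothesis \emph{odd level}, which the Corollary drops; so first I would show that the assumption that $\enteri[\omega]$ is represented already forces the level to be odd. If $\enteri[\omega]$ embeds into some $\oink\in\mathbb{O}$, then it embeds locally everywhere, in particular $\enteri[\omega]\otimes\enteri_2\hookrightarrow\oink_2$. Now $2$ is inert in $\Q(\omega)=\Q(\sqrt{-3})$ --- indeed $x^2+x+1$ stays irreducible over $\finitum_2$ --- so $\enteri[\omega]\otimes\enteri_2$ is the maximal order of the unramified quadratic extension of $\Q_2$, while $\alge$ is unramified at $2$, i.e.\ $\alge_2\cong\matrici_2(\Q_2)$. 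On the Bruhat--Tits tree of $\alge_2$, the vertices corresponding to maximal $\enteri_2$-orders containing a given copy of $\enteri[\omega]\otimes\enteri_2$ are exactly the fixed points of the associated unramified quadratic torus, of which there is a unique one; since an Eichler order of level $2^e$ is the intersection of two maximal orders at distance $e$, it can contain $\enteri[\omega]\otimes\enteri_2$ only for $e=0$. Thus the $2$-part of the level is trivial, the level is odd, and the Theorem applies.

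The substantive content is entirely in the Theorem; the Corollary is a formal consequence, and I expect no genuine obstacle beyond the bookkeeping above. The only step that is not immediate is the local computation at $2$ showing that representability forces odd level, and that is standard once one identifies $\enteri[\omega]\otimes\enteri_2$ with the unramified quadratic maximal order and uses its unique fixed vertex on the tree.
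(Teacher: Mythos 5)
Your proposal is correct and follows essentially the same route as the paper: the paper likewise notes that since $\omega$ generates the unramified quadratic extension of $\Q_2$, the order $\enteri_2[\omega]$ lies in a unique maximal order at $2$, so an Eichler order of even level cannot contain a cubic root of unity, reducing to the odd-level case of Theorem 1.1, after which the bound $\frac n2+1<n$ for $n\geq 3$ gives selectivity. The only difference is that you spell out the tree-theoretic justification for the uniqueness of the fixed maximal order, where the paper simply cites its earlier work.
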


Note that the preceding corollary does not assume an odd level for the Eichler order. This is only due
to the fact that $\omega$ generates an unramified quadratic extension of $\Q_2$, and therefore
it is contained in a unique maximal order at $2$ \cite[Prop. 4.2]{Eichler2}, so an Eichler order of even
level cannot contain a copy of $\enteri[\omega]$.
The preceding result does not mention spinor genera, since every genus of Eichler orders in a quaternion 
algebra over $\mathbb{Q}$ has a unique spinor genus. This is not the case for more general orders.  

\begin{theorem}
Let $\mathbb{O}$ be a  spinor genus of orders of maximal rank  in a definite rational quaternion algebra $\alge$ that is unramified at $2$. Assume that the orders of $\mathbb{O}$ are maximal at $2$. Then, if $|\mathbb{O}|=n$, at most $\frac34(n+1)$ orders in $\mathbb{O}$ contain cubic roots of unity.
\end{theorem}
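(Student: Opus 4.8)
The plan is to realize the single spinor genus $\mathbb{O}$ combinatorially through the Bruhat--Tits tree at $2$. Since $\alge$ is unramified at $2$ we have $\alge_2\cong\matrici_2(\Q_2)$, and since the orders of $\mathbb{O}$ are maximal at $2$ each of them determines a vertex of the $3$-regular tree $\mathcal{T}=\mathcal{T}_2$. Fixing one order $\oink_0\in\mathbb{O}$ and inverting $2$, the reduced-norm-one units $\Gamma=\oink_0[1/2]^1$ form a discrete group acting on $\mathcal{T}$ with finite stabilizers, and strong approximation for the norm-one group identifies the orders of the spinor genus with the vertices of the finite connected quotient graph $G=\Gamma\backslash\mathcal{T}$; thus $|V(G)|=n$. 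The one local input I would isolate is the fact already quoted in the introduction: $\omega$ generates the unramified quadratic extension of $\Q_2$, so it fixes a unique vertex of $\mathcal{T}$ and, reducing modulo $2$, acts on the three neighbours as a $3$-cycle, its eigenvalues being primitive cube roots of unity in $\finitum_4\setminus\finitum_2$. Hence every order-$3$ element of $\Gamma$ fixes exactly one vertex and permutes its three incident edges cyclically.

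Next I would translate the arithmetic condition. An order $\oink_{[v]}$ contains a copy of $\enteri[\omega]$ exactly when $\oink_{[v]}^\times$ contains an element of order $3$, i.e.\ when the vertex stabilizer $\Gamma_v$ acts transitively on the three edges at $v$. Call such a vertex special and let $m$ be their number, so the theorem asks for $m\le\frac34(n+1)$. The structural observation is that a special vertex folds its three tree-edges into a single edge orbit, so in $G$ it meets the rest of the graph in a single direction; moreover two special vertices cannot be adjacent unless $G$ is a single edge, so for $n\ge3$ the special vertices form an independent set of leaves attached to non-special vertices. Counting oriented edge orbits by their tails gives $\tilde e=\sum_{[v]}d^+([v])$, where $d^+=1$ at special vertices and $d^+\le3$ elsewhere; combining $\tilde e\le m+3(n-m)=3n-2m$ with the connectivity estimate for the finite quotient graph produces a linear bound on $m$ of exactly the shape asserted.

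The delicate point, and the step I expect to be the main obstacle, is pinning the constant. In the cleanest situation this same count yields $m\le\frac n2+1$, which is precisely the sharper bound of Theorem~1.1 for Eichler orders; the weaker constant $\frac34(n+1)$ reflects that for a single spinor genus of general (non-Eichler) orders the edge structure of $G$ is no longer so rigid. To control it I would compute with the orbifold Euler characteristic $\chi(\Gamma)=\sum_{[v]}|\Gamma_v|^{-1}-\sum_{[e]}|\Gamma_e|^{-1}$, keeping track of how the norm-one (hence bipartition-preserving) action distributes stabilizer orders between the two colours of $\mathcal{T}$ and how edges with non-trivial stabilizer or inversion enter the tally; alternatively I would try to deduce Theorem~1.2 from Theorem~1.1 by comparing the spinor-genus graph with the genus graph it covers. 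The twin technical checks I must not skip are that the orders-to-vertices dictionary is exact for orders that are non-maximal away from $2$, so that strong approximation really identifies $\mathbb{O}$ with $\Gamma\backslash\mathcal{T}$, and that every relevant order-$3$ element is $\alge^\times$-conjugate to $\omega$ and meets the genus-uniform embedding conditions at the odd primes, so that being special faithfully records the presence of $\enteri[\omega]$.
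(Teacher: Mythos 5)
Your setup (Bruhat--Tits tree at $2$, finite quotient graph, and the characterization of orders containing $\enteri[\omega]$ as the valency-one vertices via the unramifiedness of $\Q_2(\omega)/\Q_2$) matches the paper's Proposition 4.1 and its Section 3 machinery. But there is a genuine gap at the point you yourself flag as "pinning the constant," and it is compounded by a misidentification earlier. Your group $\Gamma=\oink_0[1/2]^1$ has reduced norm $1$, hence preserves the bipartition of the tree, so $G=\Gamma\backslash\mathcal T$ is bipartite with parts $A\cup B$ corresponding to the \emph{two} spinor genera in the genus whenever $2$ is inert in the spinor class field; the given spinor genus $\mathbb O$ is only one part, say $B$, and $|V(G)|=n+|A|$, not $n$. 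Consequently your edge count $\tilde e\le m+3(n-m)$ combined with connectivity yields $m\le\frac{|V(G)|}{2}+1$, which bounds the wrong quantity: measured against $n=|B|$ alone it is simply false (a star with one $A$-vertex and three $B$-leaves has $n=3$ and $m=3>\frac n2+1$, while it exactly attains $\frac34(n+1)$). Your diagnosis that the weaker constant comes from "non-Eichler edge structure" is therefore off target; it comes from counting leaves on one side of a bipartite cubic graph in terms of the size of that side only.

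The missing ingredient is precisely the paper's Proposition 5.2: for a connected bipartite graph with parts $A,B$, all valencies at most $3$, the number $r$ of endpoints in $B$ satisfies $r\le\frac34(|B|+1)$, with equality iff the graph is a tree in which every $A$-vertex has valency $3$ and every $B$-vertex has valency $1$ or $3$. The paper proves this by first verifying the extremal case directly (classifying $A$-vertices by how many leaf-neighbours they have and combining the resulting identities with the tree count of Proposition 5.1), then reducing the general case to it by local surgeries ("nails and forks") that kill cycles and raise valencies without decreasing $r$ or changing $t=|B|-r$. Neither of your proposed substitutes (orbifold Euler characteristic bookkeeping, or covering-space comparison with the genus graph) is carried out, and neither obviously produces the $\frac34$ without an argument of this bipartite-leaf-counting type; as written, the proof is incomplete at its combinatorial core. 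The case where $2$ splits in the spinor class field is fine and reduces, as in the paper, to the bound $\frac n2+1\le\frac34(n+1)$.
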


\begin{cor}
Let $\mathbb{O}$ be a spinor genus of orders of maximal rank  in a definite rational quaternion algebra $\alge$ that is unramified at $2$. Assume that the orders of $\mathbb{O}$ are maximal at $2$. Then if $\enteri[\omega]$ is represented by $\mathbb{O}$, it is selective for $\mathbb{O}$, as soon as
$\mathbb{O}$ contains at least four conjugacy classes.
\end{cor}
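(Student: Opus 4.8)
The plan is to derive this corollary directly from Theorem 1.2 by a counting argument, since the two statements share identical hypotheses. Write $n=|\mathbb{O}|$ for the number of conjugacy classes in the spinor genus $\mathbb{O}$. By assumption $\enteri[\omega]$ is represented by $\mathbb{O}$, so at least one order in $\mathbb{O}$ contains a primitive cubic root of unity. Thus the first half of the definition of selectivity — that $\enteri[\omega]$ embeds in \emph{some} order of $\mathbb{O}$ — holds automatically, and the only remaining task is to establish that $\enteri[\omega]$ fails to embed in at least one order of $\mathbb{O}$.

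For this second half I would invoke Theorem 1.2, which bounds the number of orders in $\mathbb{O}$ containing a cubic root of unity by $\frac34(n+1)$. The key step is the elementary observation that this bound is strictly smaller than $n$ precisely when $n\geq 4$: indeed $\frac34(n+1)<n$ is equivalent to $3(n+1)<4n$, i.e. to $n>3$. Hence, under the hypothesis that $\mathbb{O}$ contains at least four conjugacy classes, the number of orders containing a copy of $\enteri[\omega]$ is at most $\frac34(n+1)<n$; since this count is an integer it is at most $n-1$, so some order of $\mathbb{O}$ does not contain $\enteri[\omega]$. Combining the two halves, $\enteri[\omega]$ embeds in some but not all orders of $\mathbb{O}$, which is exactly the assertion that it is selective for $\mathbb{O}$.

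I expect no genuine obstacle here beyond Theorem 1.2 itself: the corollary is simply the translation of the quantitative bound $\frac34(n+1)$ into the qualitative statement of selectivity, and the only content is locating the threshold $n\geq 4$ at which the bound drops below the total count. The natural remark to append is that the analogous first corollary rested on the sharper bound $\frac n2+1$ of Theorem 1.1, whose threshold $\frac n2+1<n$ is reached already at $n\geq 3$; the weaker bound available for general orders of maximal rank, as opposed to Eichler orders, is exactly what raises the threshold in the present statement from three conjugacy classes to four.
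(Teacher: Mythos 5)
Your argument is correct and is exactly the (implicit) derivation the paper intends: the corollary follows from Theorem 1.2 because $\frac34(n+1)<n$ precisely when $n\geq4$, so some order in $\mathbb{O}$ omits $\enteri[\omega]$ while the representation hypothesis guarantees another contains it. The closing remark comparing the thresholds $n\geq3$ and $n\geq4$ also matches the paper's structure.
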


The latter result can be refined a little more using the theory of representation by spinor genera. An order
is called spinor selective for a given genus  if it is represented by some, but not all, the spinor genera in 
that genus. 

\begin{theorem}
Let $\mathbb{O}$ be a spinor genus of orders of maximal rank  in a definite rational quaternion algebra $\alge$
satisfying the following conditions:
\begin{itemize}
\item $\alge$ is unramified at $2$,
\item the orders in $\mathbb{O}$ are maximal at $2$,
\item $\mathbb{O}$ contains $3$ conjugacy classes, and 
\item $\enteri[\omega]$ is not spinor selective for the genus $\hat{\mathbb{O}}$ containing $\mathbb{O}$.
\end{itemize}
Then $\enteri[\omega]$ is selective for $\mathbb{O}$.
\end{theorem}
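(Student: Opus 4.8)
The plan is to reduce the assertion that $\enteri[\omega]$ is selective for the spinor genus $\mathbb{O}$ to the conjunction of two statements: (i) $\enteri[\omega]$ embeds into at least one of the three conjugacy classes of $\mathbb{O}$, and (ii) it fails to embed into at least one of them. Writing $t$ for the number of classes in $\mathbb{O}$ that contain a copy of $\enteri[\omega]$, selectivity for $\mathbb{O}$ is precisely the statement $1\le t\le 2$. Throughout I would use the observation underlying the quotient-graph description, namely that a maximal-at-$2$ order $\oink$ contains $\enteri[\omega]$ if and only if $\oink^\times$ contains an element of order three. Since $\Q(\omega)=\Q(\sqrt{-3})$ is unramified at $2$, such an element lies in a unique maximal order at $2$, hence fixes a single vertex of the Bruhat--Tits tree at $2$; the bipartition of that tree by parity of distance records the spinor norm at $2$, which is the component of the spinor class group separating $\mathbb{O}$ from its complement inside $\hat{\mathbb{O}}$.

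For (i), I would first check that the hypotheses force $\enteri[\omega]$ to be represented by the whole genus $\hat{\mathbb{O}}$, i.e.\ that local embeddings exist at every place: the extension $\Q(\sqrt{-3})$ is unramified at $2$ where the orders are maximal, the algebra $\Q_p(\sqrt{-3})$ is either a field or split at every other unramified prime, and its maximal order embeds into the unique maximal order at each ramified prime. Representation by the genus is equivalent to this local representability, so $\enteri[\omega]$ is represented by some spinor genus in $\hat{\mathbb{O}}$. Hypothesis (iv), that $\enteri[\omega]$ is not spinor selective for $\hat{\mathbb{O}}$, then upgrades this to representation by \emph{every} spinor genus, and in particular by $\mathbb{O}$; hence $t\ge 1$.

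For (ii), which is the crux, I would argue by contradiction and assume $t=3$, so that all three classes of $\mathbb{O}$ carry an order-three unit. This exactly saturates the bound of Theorem 1.2, which for a spinor genus with $n$ classes asserts $t\le \frac34(n+1)$ and reads $t\le 3$ when $n=3$, giving no slack. The strategy is to reopen the counting that proves that bound and record when each of its inequalities is tight: on the bipartite quotient graph at $2$ an order-three rotation about a vertex $v$ cyclically permutes the three edges issuing from $v$ into the opposite side, and the factor $\frac34(n+1)$ comes from balancing these rotations against the edge count between the two sides. I expect equality $t=3$ to force a rigid extremal configuration in which the order-three elements are concentrated on the $\mathbb{O}$-side; reassembling the complementary side and applying the genus-level count then overshoots \emph{unless} the complementary spinor genus contains no copy of $\enteri[\omega]$ at all. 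But that means $\enteri[\omega]$ is represented by $\mathbb{O}$ and not by its complement, i.e.\ it is spinor selective for $\hat{\mathbb{O}}$, contradicting hypothesis (iv). Hence $t\le 2$, and combined with (i) this yields $1\le t\le 2$.

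The main obstacle is precisely this equality analysis in step (ii): the bound of Theorem 1.2 is vacuous at $n=3$, so the entire force of the argument must come from characterizing the extremal graphs that attain $\frac34(n+1)$ and showing that such extremality is equivalent to the spinor-selective distribution of order-three elements among the two sides of the tree at $2$. In the language of the introduction this amounts to identifying the representation field $F\subseteq\Q(\sqrt{-3})$ that governs the distribution of embeddings among conjugacy classes and verifying that $t=3$ can occur only when $F$ meets the spinor class field of $\hat{\mathbb{O}}$ nontrivially, which is exactly the configuration excluded by hypothesis (iv). Making this dictionary between the combinatorial extremal case and the class-field obstruction precise is where the real work lies.
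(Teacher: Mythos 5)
Your framework (classifying graph at $2$, endpoints $=$ classes containing a cubic root of unity, the two sides of the tree corresponding to two spinor genera) is the right one, and the extremal analysis you sketch in step (ii) does, if carried out, close the argument in the bipartite case: if all three classes of $\mathbb{O}$ were endpoints then $r=3=\frac34(n+1)$ saturates the bound, and the equality clause of the paper's Proposition \ref{p52} forces the quotient graph to be a tree in which every vertex on the opposite side has valency $3$; counting edges then shows the graph is a star with a single, non-endpoint center, so the complementary spinor genus does not represent $\enteri[\omega]$, contradicting hypothesis (iv). So your intuition about what extremality forces is correct. But as written this is a plan, not a proof: you explicitly defer the equality analysis (``where the real work lies''), and the detour through representation fields $F\subseteq\Q(\sqrt{-3})$ is neither needed nor readily available, since $\alge$ fails Eichler's condition. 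The paper reaches the same contradiction with no extremal counting at all: by (iv) there is an endpoint $\Da_1$ in the complementary spinor genus $\mathbb{O}_1$; its unique neighbor $\Da$ lies in $\mathbb{O}$; if every class of $\mathbb{O}$ were an endpoint then $\Da$ would also have valency one, and the connected quotient graph would consist of just the two vertices $\Da,\Da_1$, contradicting $|\mathbb{O}|=3$.

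Two genuine gaps remain. First, you silently assume that the parity bipartition of the tree at $2$ separates $\mathbb{O}$ from a complementary spinor genus, i.e., that $2$ is inert in the spinor class field $\Sigma$. If $2$ splits in $\Sigma$ (in particular if $\hat{\mathbb{O}}$ has a single spinor genus, where (iv) is vacuous), all vertices of the quotient graph lie in $\mathbb{O}$ itself, there is no ``opposite side'', and your step (ii) has nothing to work with; this case must be handled separately by the bound $r\le\frac n2+1$ of Proposition \ref{p51}, which with $n=3$ gives $r\le2$ directly. Second, your justification of step (i) via local representability at every place is not sound for a genus of arbitrary orders of maximal rank: at an odd prime where the orders are not maximal there is no a priori local embedding of $\enteri_p[\omega]$. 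Representation of $\enteri[\omega]$ by the genus is really an implicit standing hypothesis (compare Corollaries 1.2 and 1.4), which together with (iv) gives representation by every spinor genus, and in particular by $\mathbb{O}$; that part of your step (i) is fine once representability is assumed rather than derived.
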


\section{Adeles and spinor genera}

Let $K$ be a number field, and let $\alge$ be a quaternion $K$-algebra. For every place $\wp$ of $K$,
we let $K_\wp$ and $\alge_\wp$ be the corresponding completions. We let $\mathbb{A}=\mathbb{A}_K$ be
the adele ring on $K$, namely the direct limit
$$ \mathbb{A}_K=\lim_{\longrightarrow}\oink_{\ad,T},\qquad \oink_{\ad,T}=
\left(\prod_{\wp\in T}K_\wp\right)\times\left(\prod_{\wp\in \Pi(K)-T}\oink_\wp\right),$$
where $\Pi(K)$ is the set of all places of $K$, 
$\oink_\wp$  is the ring of integers of the local field $K_\wp$, and the direct limit is taken with respect to the
directed set of finite subsets $T$ of $\Pi(K)$ containing $\infty(K)$. This defines a topology on
the ring $\mathbb{A}$. Similarly, if $\Da$ is an order of
maximal rank in $\alge$, we define, for every finite set $T\subseteq\Pi(K)$ containing $\infty(K)$, the $T$-adelization
$$ \Da_{\mathbb{A},T}=\left(\prod_{\wp\in T}\alge_\wp\right)\times\left(\prod_{\wp\in \Pi(K)-T}
\Da_\wp\right).$$  The adelization $\alge_{\mathbb{A}}$ is defined as the direct limit of the rings
$ \Da_{\mathbb{A},T}$. Note that this definition is independent of the choice of $\Da$ since two lattices on a 
$K$-vector space coincide at almost all places. The ring $\mathbb{A}$ is naturally identified with the sub-ring  of $\prod_{\wp\in\Pi(K)}K_\wp$
whose coordinates are integral at almost all places, although the topology of $\mathbb{A}$ as a direct limit is strictly
stronger than the product topology. The same observations apply to $\alge_{\mathbb{A}}$. For any order $\Da$, and
any finite set of places $T$ containing $\infty(K)$, the ring $\Da_{\mathbb{A},T}$ can be naturally identified with a subring
of $\alge_\ad$, and the induced topology is in fact the product topology of $\Da_{\mathbb{A},T}$. 

An order can be recovered from its adelization $\Da_{\mathbb{A},\infty(K)}$ by the formula
 $\Da=\Da_{\mathbb{A},\infty(K)}\cap\alge$, where $\alge$ embeds diagonally into $\alge_{\mathbb{A}}$.
More generally, for any finite set $T$ with $\infty(K)\subseteq T\subseteq\Pi(K)$, the intersection
$\Da^T=\Da_{\mathbb{A},T}\cap\alge$ is the $T$-order obtained from $\Da$ by inverting the non-archimedean
places in $T$.
Furthermore, conjugation induces a well defined action of $\alge_{\mathbb{A}}^*$, on the set of orders of maximal rank, 
satisfying $(a\Da a^{-1})_{\mathbb{A},T}=a\Da_{\mathbb{A},T}a^{-1}$, and therefore,
$(a\Da a^{-1})^T=a(\Da^T)a^{-1}$ for every finite set $T$ satisfying $\infty(K)\subseteq T\subseteq\Pi(K)$ \cite{abelianos}. Two orders $\Da$ and $\Da'$ of maximal rank in $\alge$ are in the same genus if and only if they 
are in the same orbit for this action, i.e., $\Da'=a\Da a^{-1}$ for some $a\in\alge_\ad$.
 They are in the same spinor genus when $a$ can be chosen as $a=bc$ where $b\in\alge$,
diagonally embedded into $\alge_{\mathbb{A}}$, while every local coordinate $c_\wp$ of $c$ has trivial reduced norm.
Genera and spinor genera for $T$-orders are defined analogously.
When $\alge$ satisfies Eichler's condition, every spinor genus contains a unique conjugacy class. This is not
 the case for definite quaternion algebras over the rationals, which are the ones that interest us in this work.
Nevertheless, spinor genera still play a role in the general setting (\S3).

\section{Spinor class fields and Classifying graphs}

For any genus $\mathbb{O}=\mathrm{Gen}(\Da)$ of orders of maximal rank, the spinor class field $\Sigma=\Sigma(\mathbb{O})$ is the class field corresponding to the class group
$K^*H(\Da)\subseteq J_K=\ad^*$, where the spinor image $H(\Da)$ is defined by
$$H(\Da)=\{N(a)|a\in\alge_{\mathbb{A}}^*,\ \Da=a\Da a^{-1}\},$$
where $N:\alge_{\mathbb{A}}^* \rightarrow J_K$ is the reduced norm on adeles. There is a well
defined \textit{distance} map $\rho:\mathbb{O}\times\mathbb{O}\rightarrow\mathrm{Gal}(\Sigma/K)$, satisfying
$\rho\big(\Da,a\Da a^{-1}\big)=[N(a),\Sigma/K]$, for any adelic element $a\in\alge_{\mathbb{A}}^*$,
 where $x\mapsto[x,\Sigma/K]$ is the Artin map on ideles.
Two orders $\Da$ and $\Da'$ are in the same spinor genus if and only if $\rho(\Da,\Da')=\mathrm{id}_\Sigma$.

More generally, for every finite set of places $T$ satisfying $\infty(K)\subseteq T\subseteq\Pi(K)$,
and for every genus of $T$-orders, we can define a spinor class field as above. In fact, for the genus
$\mathbb{O}^T=\{\Da^T|\Da\in\mathbb{O}\}$, the corresponding spinor class field $\Sigma^T$
is the largest subfield of $\Sigma=\Sigma(\mathbb{O})$ splitting completely at every non-archimedean place in $T$,
since the reduced norm is surjective at non-archimedean places.

Let $T=\infty(K)\cup\{\wp\}$ for some place $\wp$ splitting $\alge$. Then $\alge$ satisfies Eichler's condition for the
set $T$, so that every spinor genus of $T$-orders contains a unique conjugacy class. We conclude that, if $\Da_1$ and $\Da_2$ are in the same spinor genus, then the $T$-orders  $\Da^T_1$ and $\Da^T_2$ are conjugate. Replacing $\Da_2$ by a conjugate if needed, we can assume that $\Da^T_1=\Da^T_2$. In this case, $\Da_1$ and $\Da_2$ are conjugate if and only if  there is an element $a\in\alge$ satisfying both, $a\Da^T_1 a^{-1}=\Da^T_1$  and
$a\Da^T_{1,\wp} a^{-1}=\Da_{2,\wp}$. We conclude that conjugacy classes of orders $\Ha$, where $\Ha^T$ is
conjugate to $\Da^T$ are in one to one correspondence with the orbits of the conjugacy stabilizer 
$\Gamma=\mathrm{stab}_{\alge^*}(\Da^T)$ on the set of orders of maximal rank in $\alge_\wp$ that are
conjugate to $\Da_\wp$.

When $\Da_\wp$ is maximal, the latter set is simply the set $\mathbb{O}_\wp$ of maximal orders in $\alge_\wp$.
We can identify $\mathbb{O}_\wp$ with the set of vertices of the local Bruhat-Tits tree $\mathfrak{T}_\wp$ for
$\mathrm{PSL}_2(K_\wp)$ \cite[\S II.2]{vigneras}.
 In particular, the set of conjugacy classes of orders $\Ha$, where $\Ha^T$ is
conjugate to $\Da^T$ is in one to one correspondence with the set of vertices of the quotient graph (in the sense of Serre \cite{serre80}, see the remark bellow) $\Gamma\backslash\mathfrak{T}_\wp$. We call $\Gamma\backslash\mathfrak{T}_\wp$
the classifying graph of $\Da$ at $\wp$. Note that, when $\Da_\wp$ and $\Da'_\wp$ are
two neighbor in the tree, then $\Da'_\wp=a\Da_\wp a^{-1}$ for some local element $a\in\alge_\wp$ whose
reduced norm is a uniformizing parameter $\pi_\wp$.  In particular, if $\Da_1$ and $\Da_2$ are maximal orders
corresponding to neighboring vertices in the tree, their distance satisfies $\rho(\Da_1,\Da_2)=[e(\wp),\Sigma/K]$,
where $e(\wp)$ is an idele whose coordinates are as follows:
$$e(\wp)_{\mathfrak{q}}=\left\{\begin{array}{rcl}\pi_\wp&\textnormal{ if }&\mathfrak{q}=\wp\\
1&\textnormal{ if }&\mathfrak{q}\neq\wp\end{array}\right..$$
We conclude that the orders corresponding to the vertices in the classifying graph of $\Da$ belong to a unique
spinor genus if $\wp$ splits in $\Sigma/K$ and two spinor genera otherwise. In the latter case the classifying graph is bipartite, so in particular, it contains no loops. 

\begin{remark}
The definition of quotient graph given in \cite{serre80} assumes that the group acts on the tree without inversion.
This can be fixed, as noted in the same reference, by replacing the tree by its first barycentric subdivision. We adopt
this convention in all that follows, but we reserve the word vertex for a vertex of the original graph, 
while the barycenters
of edges are called virtual vertices. When drawing actual pictures, virtual 
vertices are not drawn unless they are  endpoints,
i.e., their valency is $1$ (See Fig. 1).
This type of endpoints appear only if the original action had inversions.  
\begin{figure}
\unitlength 1mm 
\linethickness{0.4pt}
\ifx\plotpoint\undefined\newsavebox{\plotpoint}\fi 
\[\textnormal{(A)}\ 
\begin{picture}(24,6)(0,0)
\put(2,3){\line(1,0){12}}\put(2,3){\makebox(0,0)[cc]{$\bullet$}}\put(14,3){\makebox(0,0)[cc]{$*$}}
\end{picture}
\]\caption{ A vertex ($\bullet$) with an edge ending on a virtual endpoint ($*$). }
\end{figure}
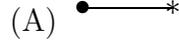
\end{remark}
\begin{remark}
We should note that the quotient graph defined here have been applied in existing literature to the study
of representation of commutative orders. See for example \cite{pays}.
\end{remark}

\section{Characterizing the orders with cubic roots}

Next result gives a complete characterization of the orders in a genus containing a cubic root of unity. This will be
used in next section to prove our main theorems.

\begin{proposition}
Let $\mathbb{O}$ be a genus of orders of maximal rank in the definite rational quaternion algebra $\alge$ containing a cubic
root of unity. Assume the following conditions hold:
\begin{itemize}
\item $\alge_2\cong\matrici_2(\mathbb{Q}_2)$.
\item The orders in  $\mathbb{O}$ are maximal at $2$.
\end{itemize}
Let $\mathfrak{G}=\Gamma\backslash\mathfrak{T}_2$ be the classifying graph at $2$, 
as described in the preceding section.
Then the orders orders in  $\mathbb{O}$ containing a cubic root of unity correspond 
precisely to the (non-virtual) endpoints, i.e., vertices of valency one, in $\mathfrak{G}$.
\end{proposition}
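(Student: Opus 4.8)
The plan is to reduce the containment of a cubic root of unity to a purely local statement about how the relevant torus acts on the link of a vertex of $\mathfrak{T}_2$, and then read off the valency from that action. Write $R=\enteri[\omega]$ and $L=\Q(\omega)=\Q(\sqrt{-3})$; since $2$ is inert in $L$, the completion $L_2$ is the unramified quadratic extension of $\Q_2$, so $R_2=\oink_{L_2}$ sits inside a unique maximal order of $\alge_2\cong\matrici_2(\Q_2)$, attached to a single vertex $\tilde v_0$ of $\mathfrak{T}_2$ (this is the cited \cite[Prop. 4.2]{Eichler2}). Throughout I write $\Da_{\tilde v}$ for a representative of the class of $\mathfrak{G}$ attached to a vertex $\tilde v$, and $M_{\tilde v}=(\Da_{\tilde v})_2$ for the corresponding maximal order at $2$. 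The key geometric input I would isolate first is the behaviour of an order-$3$ unit $u$ generating $R_2$ on the link of its fixed vertex: since the residue field is $\mathbb{F}_2$, that link is $\mathbb{P}^1(\mathbb{F}_2)$, a set of three points, and the reduction of $u$ lies in the nonsplit torus $\mathbb{F}_4^*\subset\mathrm{GL}_2(\mathbb{F}_2)$, whose nontrivial elements have no eigenvalue in $\mathbb{F}_2$ and therefore act on $\mathbb{P}^1(\mathbb{F}_2)$ as a $3$-cycle. Thus $u$ fixes $\tilde v_0$ and cyclically permutes its three neighbours. This is exactly the coincidence that ties cubic roots of unity to the prime $2$; everything else is bookkeeping around it.

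For the forward implication, suppose $\Da_{\tilde v}$ contains a cubic root of unity $u$. Being a root of unity, $u$ is a unit of $\Da_{\tilde v}$, hence a unit of the $T$-order of $\Da_{\tilde v}$, which is the common $\Da^T$ (inverting the place $2$ erases the difference at $2$); so $u\in(\Da^T)^*\subseteq\Gamma=\mathrm{stab}_{\alge^*}(\Da^T)$. Its localization $u_2$ lies in $M_{\tilde v}^*$, so $u$ fixes $\tilde v$, and by the first paragraph $u$ permutes the three edges at $\tilde v$ cyclically. Since $u\in\Gamma_{\tilde v}$, these three edges form a single orbit in the quotient, so $[\tilde v]$ has valency one. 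There are no inversion issues here: every element of $\Gamma_{\tilde v}$ fixes $\tilde v$ and so cannot invert an incident edge, which is why the resulting endpoint is a genuine (non-virtual) one.

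For the converse I would argue that valency one at $[\tilde v]$ means the three edges lie in one $\Gamma_{\tilde v}$-orbit, i.e. the image of $\Gamma_{\tilde v}$ in $\mathrm{PGL}_2(\mathbb{F}_2)\cong S_3$ is transitive on $\mathbb{P}^1(\mathbb{F}_2)$; a transitive subgroup of $S_3$ contains a $3$-cycle, so some $g\in\Gamma_{\tilde v}$ reduces to an order-$3$ element. Since $\Gamma_{\tilde v}/\Q^*$ is finite (the orders are definite) and the kernel of $\mathrm{stab}(\tilde v)\to\mathrm{PGL}_2(\mathbb{F}_2)$ is a pro-$2$ group, after replacing $g$ by a power I may assume its class in $\Gamma_{\tilde v}/\Q^*$ has order $3$, so $g^3=q\in\Q^*$; inspecting the factorization of $x^3-q$ then forces $q=c^3$ and $g=c\omega$ for a genuine cubic root of unity $\omega\in\alge$ normalizing $\Da_{\tilde v}$. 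The hard part will be the last step: passing from \emph{$\omega$ normalizes $\Da_{\tilde v}$} to \emph{$\omega\in\Da_{\tilde v}$}. At $2$ this is automatic, because $\omega_2$ generates the unramified torus and $\tilde v$ is its unique fixed vertex, so $\omega_2\in\oink_{L_2}\subseteq M_{\tilde v}$; at an odd place $\wp$ I would use that the local normalizer of $\Da_\wp$ is $\Q_\wp^*\,\Da_\wp^*$ times an Atkin--Lehner $2$-group, so the order-$3$ element $\omega_\wp$ is trivial in the normalizer quotient and hence, after correcting by a scalar of unit norm, lies in $\Da_\wp^*$. Assembling these local conclusions gives $\omega\in\Da_{\tilde v}$, completing the equivalence.

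Finally I would record that the valency at $[\tilde v]$ is $1$, $2$, or $3$ according as the image of $\Gamma_{\tilde v}$ in $S_3$ has order divisible by $3$, is a single transposition, or is trivial; only the first case produces an order-$3$ element, reconfirming that the endpoints of $\mathfrak{G}$ are exactly the classes carrying $\enteri[\omega]$. I expect the only genuinely delicate point to be the normalizer-versus-order issue at the odd bad primes flagged above; for Eichler orders, the case driving the theorems of the introduction, this is standard, and it is there that the level hypotheses enter.
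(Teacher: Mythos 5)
Your argument has the same skeleton as the paper's proof: a cubic root of unity in $\Da$ is a unit, hence lies in $\Gamma_{\Da}$ and permutes the three neighbors of $\Da_2$ transitively, so the corresponding vertex becomes an endpoint; conversely, an endpoint forces an element of order $3$ in the finite group $\Gamma_{\Da}/\Q^*$, which must be a rational multiple of a cubic root of unity normalizing $\Da$, and one then checks membership place by place. Within that skeleton you make one genuine simplification: to pass from $g^3=q\in\Q^*$ to $g=c\omega$, you factor $x^3-q$ over $\Q$ and observe that the quadratic minimal polynomial of $g$ can only be the factor $x^2+cx+c^2$ with $q=c^3$. The paper instead diagonalizes over $\overline{\Q}$ and invokes the rationality theorem for projective representations \cite[Th. 1]{proj}; your route is more elementary and reaches the same conclusion without the external citation.

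The one real gap is the one you flagged yourself: the final step (``normalizes $\Da_\wp$ implies lies in $\Da_\wp$'') at an odd split place $\wp$. The proposition only assumes $\Da_\wp$ has maximal rank there, and for a general such order the normalizer modulo $\Q_\wp^*\Da_\wp^*$ is not a $2$-group: for $\Da_\wp=\enteri_\wp+\wp\matrici_2(\enteri_\wp)$ it contains all of $\mathrm{PGL}_2(\mathbb{F}_\wp)$. So ``order $3$, hence trivial in the Atkin--Lehner quotient'' is only valid when $\Da_\wp$ is maximal or Eichler. To be fair, the paper's own proof is no sharper here---it quotes the normalizer formula for ``a local maximal order at split places,'' which silently uses maximality away from $2$---and your version does close the step in every case actually used in Theorems 1--3. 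A treatment in the stated generality would have to exploit the hypothesis that the genus contains a cubic root of unity, which excludes local orders such as $\enteri_\wp+\wp\matrici_2(\enteri_\wp)$ (an order-$3$ unit there would reduce to a scalar mod $\wp$, forcing $\wp=3$). Your reduced-norm remark is the correct way to remove the residual scalar at the end: since $N(\omega)=1$, the central factor in $\omega=t v$, $t\in\Q_\wp^*$, $v\in\Da_\wp^*$, must be a unit.
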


\begin{proof}
 Let $\Da\in\mathbb{O}$ be an order containing a cubic root of unity $\omega$. Since $\enteri_2[\omega]$ is
the ring of integers in the unique unramified quadratic extension of $\mathbb{Q}_2$, then $\enteri_2[\omega]$
 is contained in a unique maximal order of $\alge_2$ \cite[Prop. 4.2]{Eichler2}, namely $\Da_2$.
 Recall that the conjugation-stabilizer of
a local maximal order $\Ea_2$, is $K^*\Ea_2^*$. Since $\omega$ is a unit we conclude that conjugation by $\omega$ 
permutes transitively the three neighbors of $\Da$, and therefore  the vertex in $\mathfrak{G}$ corresponding to $\Da$
has valency one. 

Now let $\Da$ be a global order corresponding to a vertex of valency one in the classifying graph $\mathfrak{G}$.
Let $\Gamma_{\Da}$ be the stabilizer of $\Da$ in $\Gamma$. Then $\overline{\Gamma_{\Da}}=
\Gamma_{\Da}/K^*$ is the automorphism group of the order
$\Da$, and therefore it is finite, since $\alge$ is definite. 
Since $\Da$  corresponds to a vertex of valency one in $\mathfrak{G}$, the
group $\overline{\Gamma_{\Da}}$ must permute transitively the three neighbors of $\Da$, and therefore it contains an
element of order $3$. In other words, there exists an element $u\in \alge\backslash K$ satisfying $u\Da u^{-1}=\Da$ and $u^3\in K^*$. This element has a quadratic minimal polynomial with rational (and therefore real) coefficients. So
it has either two real roots or two conjugate complex roots.

Now consider $u$ as an element of $\alge_{\bar{\mathbb{Q}}}=\alge\otimes_{\mathbb{Q}}\bar{\mathbb{Q}}\cong\matrici_2(\bar{\mathbb{Q}})$, where
$\bar{\mathbb{Q}}$ is the field of algebraic numbers.
Then $u$ is conjugate to a real multiple of a matrix of the form $\bbmatrix{\eta_1}00{\eta_2}$ where $\eta_1$ and
$\eta_2$ are two different cubic roots of unity. By the previous discussion we can 
assume either $(\eta_1,\eta_2)=
(\omega,\omega^2)$ or $(\eta_2,\eta_1)=(\omega,\omega^2)$. Hence, the projective representation $\rho:C_3\rightarrow \alge^*/K^*$
of the cyclic group $C_3$ defined by $u$ is conjugated to the representation defined by
a cubic root of unity over $\overline{\mathbb{Q}}$, and therefore also over $\mathbb{Q}$ 
by \cite[Th. 1]{proj}, so we can assume that $u$ is a root of unity. Since the stabilizer of a local maximal order
$\Da_\wp$ equals $K_\wp^*\Da_\wp^*$ at split places, and $\Da_\wp$ contains all local integers at ramified places,
we must conclude that $u\in\Da^*$, locally everywhere and hence globally. The result follows.
\end{proof}

\begin{example}
Consider the quaternion algebra $\alge=\left(\frac{-3,-3}{\mathbb{Q}}\right)$. Let $i$ and $j$ be generators
satisfying $i^2=j^2=-3$ and $ij=-ji$. Set $i=2\eta+1$ and $j=2\omega+1$, so that $\eta$ and $\omega$ are
cubic roots of unity. Then each of the orders $\Da=\enteri[\eta,j]$ and $\Da'=\enteri[i,\omega]$ is contained in a unique maximal order, since they are maximal outside the place $3$ and $\alge_3$ has a unique maximal order. Denote these maximal orders by $\Da_0$ and $\Da'_0$. They  are endpoints of the Classifying
Graph at $2$ by the previous proposition. We claim that they are neighbors.
 Since they are also isomorphic, and therefore conjugate, we must conclude that 
all maximal orders in $\alge$ are conjugate.

Now we prove the claim. It follow from the tables in \cite{isaa} that $\enteri_2[i,j]$ is contained in exactly $2$
maximal orders. Alternatively, we can observe that $\enteri_2[i+j]\cong\enteri_2[\sqrt{-6}]$ is the ring of 
integers of a ramified extension, and therefore it is contained in exactly $2$ maximal orders by \cite[Prop. 4.2]{Eichler2}.
\end{example}

\section{Proofs of the main results}

Theorem 1 follows from proposition 4.1 and Proposition \ref{p51} below:

\begin{proposition}\label{p51}
Let $G$ be a connected graph where every vertex has valency $3$ or less. If $G$ has $n=r+t$ vertices, $r$ of which
have valency $1$, then $r\leq t+2$, or equivalently, $r\leq\frac n2+1$. Equality, in both inequalities, 
holds if and only if
$G$ is a tree where every vertex have valency $1$ or $3$.
\end{proposition}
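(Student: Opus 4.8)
The plan is to run a double-counting argument on the edges, combining the handshaking identity with the fact that a connected graph on $n$ vertices has at least $n-1$ edges. First I would write $m$ for the number of edges of $G$ and split the $t$ non-pendant vertices by valency: let $s$ be the number of vertices of valency $2$ and $u$ the number of valency $3$. Since $G$ is connected we may assume $n\geq 2$ (the case $n=1$ being trivial), so no vertex has valency $0$; hence $t=s+u$ and $n=r+s+u$. The handshaking lemma gives $\sum_v \mathrm{val}(v)=2m$, that is $r+2s+3u=2m$.

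Next I would feed in the connectivity bound $m\geq n-1$, which holds because any spanning tree already uses $n-1$ edges, with equality exactly when $G$ is itself a tree. Substituting $n=r+s+u$ yields $r+2s+3u=2m\geq 2(r+s+u)-2$, which after cancellation simplifies to $u\geq r-2$, i.e. $r\leq u+2$. Since trivially $u\leq s+u=t$, I obtain the chain $r\leq u+2\leq t+2$, the first claimed inequality. The second is a formal consequence: $r\leq t+2$ is the same as $2r\leq (r+t)+2=n+2$, i.e. $r\leq \frac n2+1$, and the two statements are visibly equivalent.

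For the equality case I would track where the chain $r\leq u+2\leq t+2$ can be tight. Equality in the first step $r\leq u+2$ forces $m=n-1$, so $G$ is a tree; equality in the second step $u\leq t$ forces $s=0$, so $G$ has no vertex of valency $2$. Conversely, for a tree with all valencies in $\{1,3\}$ we have $s=0$ and $m=n-1$, and running the computation backwards returns $r=u+2=t+2$. Thus both inequalities are simultaneously equalities precisely when $G$ is a tree whose every vertex has valency $1$ or $3$.

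There is no serious obstacle here; the heart of the matter is a one-line application of the degree-sum formula. The only points requiring care are the bookkeeping of degenerate valencies---ruling out valency-$0$ vertices via connectedness, and isolating the valency-$2$ vertices as the exact obstruction to equality---together with the remark that the bound $m\geq n-1$ and its equality case (tree) remain valid for multigraphs, since a loop or a repeated edge only increases $m$.
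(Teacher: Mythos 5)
Your proof is correct and follows essentially the same route as the paper: the handshaking identity combined with the bound $m\ge n-1$ for a connected graph, with the equality analysis isolating the tree condition and the absence of valency-$2$ vertices. The only cosmetic difference is that you track the valency-$2$ count $s$ explicitly, whereas the paper bundles it into the single inequality $3t+r\ge 2m$.
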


\begin{proof}
Since the graph is connected, the number of edges is $v\geq n-1$, with equality if and only if the graph is a tree.
 On the other hand,  since every edge has two endpoints, $3t+r\geq 2v$. We conclude that $3t+r\geq 2(t+r-1)$,
with equality if and only if each of the previous inequalities is an equality. The result follows.
\end{proof}

\begin{remark}
Note that we might not have equality in Theorem 1.1 even if the quotient graph satisfy the sufficient conditions
for equality in Proposition \ref{p51} above, because of the existence of virtual endpoints. This can be used in a few cases to improve the bound $r\leq t+2$ and, a fortiori, also $r\leq\frac n2+1$. In fact, the presence of some particular subfields imply the existence of virtual endpoints. See the examples in the
final section.
\end{remark}

Next result is needed in the proof of Theorem 2.

\begin{proposition}\label{p52}
Let $G$ be a bipartite graph with a set of vertices
$V(G)=A\cup B$, where every vertex of $G$ joins a vertex in $A$ and a vertex in $B$.
Let $n$ be the cardinality of $B$, while $r$ is the number of endpoints in $B$. If no vertex of $G$ has a valency larger
than $3$, then $r\leq3(n+1)/4$. Equality holds if and only if  the following conditions hold:
\begin{enumerate}
\item $G$ is a tree.
\item Every vertex in $A$ has valency $3$.
\item Every vertex in $B$ has valency $3$ or $1$.
\end{enumerate}
\end{proposition}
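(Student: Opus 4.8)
The plan is to mimic the edge-counting argument of Proposition \ref{p51}, but to exploit the bipartition so as to treat the two sides $A$ and $B$ asymmetrically. Throughout I take $G$ connected, which is the situation arising in Theorem 2 and is in fact necessary for the stated bound (two disjoint copies of $K_{1,3}$ give $n=r=6$ yet $\tfrac34(n+1)=5.25$). Write $a=|A|$, keep $n=|B|$, let $v$ denote the number of edges, and let $r$ be the number of valency-one vertices in $B$. The key structural feature is that in a bipartite graph every edge has exactly one endpoint in $A$ and one in $B$, so the degree sums over the two sides \emph{separately} both equal $v$:
\[ \sum_{x\in A}\deg(x)=v=\sum_{y\in B}\deg(y). \]

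From here I would extract three inequalities. First, since each vertex of $A$ has valency at most $3$, the left identity gives $v\le 3a$. Second, splitting $B$ into its $r$ endpoints (each of degree $1$) and its $n-r$ remaining vertices (each of degree at most $3$), the right identity gives $v\le r+3(n-r)=3n-2r$, equivalently $r\le (3n-v)/2$. Third, connectivity gives $v\ge a+n-1$, with equality exactly when $G$ is a tree.

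Combining these is then routine. Substituting $a\ge v/3$ into $v\ge a+n-1$ yields $v\ge \tfrac32(n-1)$; since the bound $r\le (3n-v)/2$ is decreasing in $v$, feeding in this lower bound for $v$ produces $r\le \tfrac34(n+1)$, as claimed. For the equality clause I would trace which of the three inequalities must be tight: $r=\tfrac34(n+1)$ forces equality in $v\le 3n-2r$ (so every non-endpoint of $B$ has valency $3$, giving condition (3)), in $v\le 3a$ (so every vertex of $A$ has valency $3$, condition (2)), and in $v\ge a+n-1$ (so $G$ is a tree, condition (1)); conversely, these three conditions make all three inequalities equalities and hence force $r=\tfrac34(n+1)$.

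I do not expect a serious obstacle, since the computation is elementary. The one point requiring care is the asymmetry: unlike Proposition \ref{p51}, where one uses the single identity $2v=\sum_v\deg(v)$, here the two side-wise degree sums must be kept separate, because leaves are counted only in $B$ while the valency-$3$ saturation driving the extremal case is imposed on the $A$ side. A secondary point is simply to confirm that connectedness, which the argument genuinely uses, is available in the intended application, namely the (connected) bipartite classifying graph of Theorem 2; this is why I flag the disconnected counterexample at the outset.
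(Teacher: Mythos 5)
Your proof is correct, but it follows a genuinely different route from the paper's. The paper first settles the extremal case directly: for a tree satisfying (1)--(3) it sorts the vertices of $A$ by their number of pendant neighbours (say $m$, $p$, $s$ of them with zero, one, two), combines the identities $r=p+2s$ and $3t=3m+2p+s$ with the equality case of Proposition~\ref{p51} applied to the whole graph (which gives $r=t+m+p+s+2$), and solves to get $r=3(t+1)$; the general case is then reduced to this one by graph surgery, breaking cycles and grafting ``nails and forks'' until (1)--(3) hold, while checking from the figures that each operation preserves $t=n-r$ and does not decrease $r$. You replace all of this by one linear chain of inequalities built from the two one-sided degree sums $\sum_{x\in A}\deg x=v=\sum_{y\in B}\deg y$ (the step that genuinely uses bipartiteness) together with the connectivity bound $v\ge a+n-1$; this is shorter, independent of Proposition~\ref{p51}, and it makes the equality analysis mechanical, since each of conditions (1)--(3) is exactly the tightness of one of your three inequalities. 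Your version also isolates where connectedness enters: as your two disjoint copies of $K_{1,3}$ show, the bound is false without it, a hypothesis the paper's statement omits (it is present in Proposition~\ref{p51} and is automatic in the application, where $G$ is a quotient of the Bruhat--Tits tree, and it is also what keeps the paper's surgery step honest). What the paper's approach buys in exchange is a constructive picture of the extremal graphs; yours recovers the same characterization purely numerically.
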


\begin{proof}
First we assume that $G$  satisfies conditions (1)-(3).
Let $t=n-r$ as before.
Let $m$, $p$, and $s$ the numbers of vertices in $A$ with zero, one, or two neighboring endpoints, respectively.
Then the previous result shows that $r=t+m+p+s+2$, while on the other hand, we have the identities
$r= p+2s$ and $3t=3m+2p+s$. Adding this two identities, we get $r+3t=3(m+p+s)=3(r-t-2)$, whence
$r=3(t+1)$. Replacing $t$ by $n-r$, the result follows.

We call nails and forks to subgraphs of the shapes shown in Figure 1(A). Now, for the general case, we observe that,
by adding extra nails and forks, as shown in Figure 1(C),
 we can turn any tree in a tree satisfying the preceding hypotheses. Furthermore, cycles
can be eliminated by replacing an edge by a nail-fork pair, as shown in Figure 1(B). By repeated application of these procedures, we obtain a graph $G'$ satisfying the hypotheses. Set $V(G')=A'\cup B'$ as before, where $B$ is
identified with a subset of $B'$ in an obvious way. Let $n'$ be the cardinality of $B'$, while $r'$ is the number of endpoints 
in $B'$, and $t'=n'-r'$. Note that $t'=t$ and $r\leq r'$, by a case by case inspection of the reduction steps in Figure 1. 
We conclude that $$r\leq r'=3(t'+1)=3(t+1)=3(n+1)-3r,$$
whence the result follows. Furthermore, the inequality is strict unless no reduction was needed.

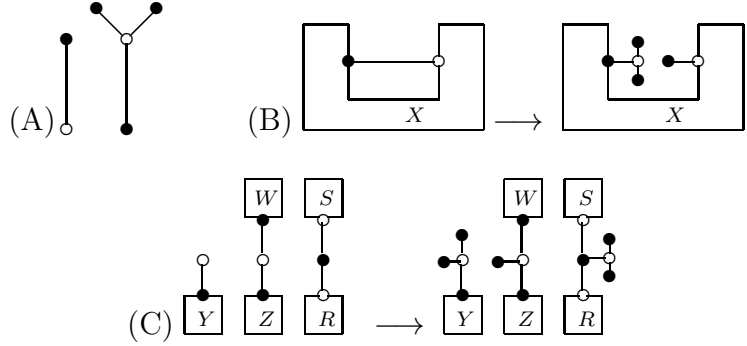
\begin{figure}
\unitlength 1mm 
\linethickness{0.4pt}
\ifx\plotpoint\undefined\newsavebox{\plotpoint}\fi 
\[\textnormal{(A)}\ 
\begin{picture}(24,24)(0,0)
\put(0,0.7){\line(0,1){11.3}}
\put(8,0){\line(0,1){11.4}}\put(8.5,12.5){\line(1,1){3.7}}\put(7.5,12.5){\line(-1,1){3.7}}
\put(0,0){\makebox(0,0)[cc]{$\circ$}}\put(0,12){\makebox(0,0)[cc]{$\bullet$}}
\put(8,0){\makebox(0,0)[cc]{$\bullet$}}
\put(8,12){\makebox(0,0)[cc]{$\circ$}}
\put(12,16){\makebox(0,0)[cc]{$\bullet$}}\put(4,16){\makebox(0,0)[cc]{$\bullet$}}
\end{picture}
\textnormal{(B)}\ 
\begin{picture}(24,24)(0,0)
\put(0,0){\line(0,1){14}}\put(0,0){\line(1,0){24}}\put(24,0){\line(0,1){14}}\put(0,14){\line(1,0){6}}
\put(6,14){\line(0,-1){10}}\put(6,4){\line(1,0){12}}\put(18,4){\line(0,1){4.5}}
\put(18,9.7){\line(0,1){4.3}}\put(18,14){\line(1,0){6}}
\put(6,9){\makebox(0,0)[cc]{$\bullet$}}\put(18,9){\makebox(0,0)[cc]{$\circ$}}
\put(6,9){\line(1,0){11.4}}
\put(15,2){\makebox(0,0)[cc]{${}_{X}$}}
\end{picture}
\longrightarrow\ 
\begin{picture}(24,24)(0,0)
\put(0,0){\line(0,1){14}}\put(0,0){\line(1,0){24}}\put(24,0){\line(0,1){14}}\put(0,14){\line(1,0){6}}
\put(6,14){\line(0,-1){10}}\put(6,4){\line(1,0){12}}\put(18,14){\line(1,0){6}}\put(18,4){\line(0,1){4.5}}
\put(18,9.7){\line(0,1){4.3}}
\put(6,9){\makebox(0,0)[cc]{$\bullet$}}\put(18,9){\makebox(0,0)[cc]{$\circ$}}
\put(6,9){\line(1,0){3.4}}\put(10,9){\makebox(0,0)[cc]{$\circ$}}\put(10,9.7){\line(0,1){2}}\put(10,8.5){\line(0,-1){2}}
\put(10,11.5){\makebox(0,0)[cc]{$\bullet$}}\put(10,6.5){\makebox(0,0)[cc]{$\bullet$}}
\put(17.4,9){\line(-1,0){3.4}}\put(14,9){\makebox(0,0)[cc]{$\bullet$}}
\put(15,2){\makebox(0,0)[cc]{${}_{X}$}}
\end{picture}
\]
\[
\textnormal{(C)}\ 
\begin{picture}(24,24)(0,0)
\put(0,0){\line(0,1){5}}\put(0,5){\line(1,0){5}}\put(0,0){\line(1,0){5}}\put(5,0){\line(0,1){5}}
\put(2.5,5){\makebox(0,0)[cc]{$\bullet$}}\put(2.3,5.2){\line(0,1){4}}\put(2.5,9.7){\makebox(0,0)[cc]{$\circ$}}
\put(3,2){\makebox(0,0)[cc]{${}_{Y}$}}
\put(8,0){\line(0,1){5}}\put(8,5){\line(1,0){5}}\put(8,0){\line(1,0){5}}\put(13,0){\line(0,1){5}}
\put(10.5,5){\makebox(0,0)[cc]{$\bullet$}}\put(10.3,5.2){\line(0,1){4}}\put(10.5,9.7){\makebox(0,0)[cc]{$\circ$}}
\put(11,2){\makebox(0,0)[cc]{${}_{Z}$}}\put(8,20.5){\line(0,-1){5}}\put(8,15.5){\line(1,0){5}}\put(8,20.5){\line(1,0){5}}\put(13,20.5){\line(0,-1){5}}
\put(10.5,15){\makebox(0,0)[cc]{$\bullet$}}\put(10.3,14.8){\line(0,-1){4}}
\put(11,18){\makebox(0,0)[cc]{${}_{W}$}}
\put(16,0){\line(0,1){5}}\put(16,5){\line(1,0){2}}\put(19,5){\line(1,0){2}}
\put(16,0){\line(1,0){5}}\put(21,0){\line(0,1){5}}
\put(18.5,5){\makebox(0,0)[cc]{$\circ$}}
\put(18.3,5.7){\line(0,1){4}}\put(18.5,9.7){\makebox(0,0)[cc]{$\bullet$}}
\put(19,2){\makebox(0,0)[cc]{${}_{R}$}}\put(16,20.5){\line(0,-1){5}}
\put(19,15.5){\line(1,0){2}}\put(16,15.5){\line(1,0){2}}\put(16,20.5){\line(1,0){5}}\put(21,20.5){\line(0,-1){5}}
\put(18.5,15){\makebox(0,0)[cc]{$\circ$}}\put(18.3,14.8){\line(0,-1){4}}
\put(19,18){\makebox(0,0)[cc]{${}_{S}$}}
\end{picture}
\longrightarrow\ 
\begin{picture}(24,24)(0,0)
\put(0,0){\line(0,1){5}}\put(0,5){\line(1,0){5}}\put(0,0){\line(1,0){5}}\put(5,0){\line(0,1){5}}
\put(2.5,5){\makebox(0,0)[cc]{$\bullet$}}\put(2.3,5.2){\line(0,1){4}}\put(2.5,9.7){\makebox(0,0)[cc]{$\circ$}}
\put(3,2){\makebox(0,0)[cc]{${}_{Y}$}}
\put(8,0){\line(0,1){5}}\put(8,5){\line(1,0){5}}\put(8,0){\line(1,0){5}}\put(13,0){\line(0,1){5}}
\put(10.5,5){\makebox(0,0)[cc]{$\bullet$}}\put(10.3,5.2){\line(0,1){4}}\put(10.5,9.7){\makebox(0,0)[cc]{$\circ$}}
\put(11,2){\makebox(0,0)[cc]{${}_{Z}$}}\put(8,20.5){\line(0,-1){5}}\put(8,15.5){\line(1,0){5}}\put(8,20.5){\line(1,0){5}}\put(13,20.5){\line(0,-1){5}}
\put(10.5,15){\makebox(0,0)[cc]{$\bullet$}}\put(10.3,14.8){\line(0,-1){4}}
\put(11,18){\makebox(0,0)[cc]{${}_{W}$}}
\put(16,0){\line(0,1){5}}\put(16,5){\line(1,0){2}}\put(19,5){\line(1,0){2}}
\put(16,0){\line(1,0){5}}\put(21,0){\line(0,1){5}}
\put(18.5,5){\makebox(0,0)[cc]{$\circ$}}
\put(18.3,5.7){\line(0,1){4}}\put(18.5,9.7){\makebox(0,0)[cc]{$\bullet$}}
\put(19,2){\makebox(0,0)[cc]{${}_{R}$}}\put(16,20.5){\line(0,-1){5}}
\put(19,15.5){\line(1,0){2}}\put(16,15.5){\line(1,0){2}}\put(16,20.5){\line(1,0){5}}\put(21,20.5){\line(0,-1){5}}
\put(18.5,15){\makebox(0,0)[cc]{$\circ$}}\put(18.3,14.8){\line(0,-1){4}}
\put(19,18){\makebox(0,0)[cc]{${}_{S}$}}
\put(18,10){\line(1,0){3.4}}\put(22,10){\makebox(0,0)[cc]{$\circ$}}\put(22,10.7){\line(0,1){2}}
\put(22,9.5){\line(0,-1){2}}
\put(22,12.5){\makebox(0,0)[cc]{$\bullet$}}\put(22,7.5){\makebox(0,0)[cc]{$\bullet$}}
\put(10.1,9.6){\line(-1,0){2.2}}\put(7.2,9.4){\makebox(0,0)[cc]{$\bullet$}}
\put(2.1,9.6){\line(-1,0){2}}\put(0,9.4){\makebox(0,0)[cc]{$\bullet$}}
\put(2.4,10.8){\line(0,1){2}}\put(2.4,13){\makebox(0,0)[cc]{$\bullet$}}
\end{picture}
\] \caption{(A) Nail and fork. (B) Deleting a cycle by introducing a nail-fork pair.
(C) Adding nails and forks to several trees.}
\end{figure}
\end{proof}

\subparagraph{\textit{Proof of Theorem 2.}}
 Let $\Da$ be an order in the spinor genus $\mathbb{O}$, and let $\Sigma$ be the spinor class field 
corresponding to this genus. Let $e(2)\in J_{\Q}$ be the idele whose only non-trivial coordinate is $e(2)_2=2$,
and let $\sigma=[e(2),\Sigma/\Q]$. If $\sigma$ is the identity, the vertices in the quotient graph $\mathfrak{G}$
correspond to maximal orders in one spinor genus and the result follows from Proposition \ref{p51} since
$\frac n2+1\leq \frac34(n+1)$ for $n\geq1$. If
$\sigma$ is not the identity, the graph $\mathfrak{G}$ is bipartite, and its vertices belong to exactly two spinor genera, so the result follows from Proposition \ref{p52}. \qed

\subparagraph{\textit{Proof of Theorem 3.}}
Assume all conditions in the theorem hold, and let $\sigma$ be as in the preceding proof. If $\sigma$ is the
identity, the vertices in $\mathfrak{G}$ correspond to maximal orders in one spinor genus and the result follows from Proposition \ref{p51}  as before. We assume therefore that $\sigma$ is not the identity, so in particular the graph $\mathfrak{G}$ is bipartite, with any pair of adjacent vertices  in different spinor genera. Let $\mathbb{O}$ and
$\mathbb{O}_1$ be such spinor genera. Since $\enteri[\omega]$ is not spinor selective for this genus, there must
exists an order in each spinor genus containing a copy of $\enteri[\omega]$. Let $\Da_1\in\mathbb{O}_1$
 be such an order. Let $\Da\in\mathbb{O}$ be a neighbor of $\Da_1$. If $\enteri[\omega]$ is not selective for the
spinor genus $\mathbb{O}$, both $\Da$ and $\Da_1$ are endpoints and the graph contains only two vertices,
a contradiction.\qed

\section{Generalizations and examples}

The methods employed here for the ring $\enteri[\omega]$ works with a limited number of orders whose presence
is reflected on the combinatorial properties of the local graph at some finite place $\wp$. 

\begin{example}
Roots
of unity $\eta$ whose order is a Fermat prime $p=2^{2^n}+1$ produce vertices of valency $1$ for any definite
quaternion algebra defined over the totally real field $L=\mathbb{Q}(\eta)\cap\mathbb{R}$, at any dyadic place 
$\wp$ of $L$. This can be proved as follows:
\begin{quote}
The extension
$\mathbb{Q}(\eta)/\mathbb{Q}$ is unramified at $2$, since the polynomial $x^p-1$
has different roots in $\overline{\mathbb{F}_2}$, while each of them generates an extension of degree
$2^n+1$ over $\mathbb{F}_2$, as $m=2^n+1$ is the smallest value of $m$ for which $p$ divides $2^m-1$. 
Furthermore, the quadratic extension 
$\mathbb{Q}(\eta)/L$ is inert at every dyadic place, since $\eta\mapsto\eta^{-1}$ induces a non-trivial 
isomorphism over the residue field.
 In particular, for every dyadic place $\wp$ of $L$, the residue field
$\mathbb{L}$ satisfies $[\mathbb{L}:\mathbb{F}_2]=2^n$. We conclude that the local Bruhat-Tits tree for 
$\mathrm{PSL}_2(L_\wp)$ at any such place has vertices of valency $2^{2^n}+1=p$. Now the result follows
as in the proof of Proposition 4.1.
\end{quote}

As a consequence, we conclude, as in the proof of Theorem 1.2 that the number $r$ of conjugacy classes,
in any spinor genus of $\oink_L$-orders of size $m$, containing a unit of order $p=2^{2^n}+1$, satisfies
$$ r\leq\frac{p(p-2)}{(p-1)^2}\left(m+\frac1{p-2}\right).$$
This again shows that $\oink_L[\eta]$ is selective for large values of $m$.
\end{example}

\begin{example}
Let $\alge$ be a definite quaternion algebra splitting at $2$.
Consider the commutative order $\enteri[u]=\oink_{\mathbb{Q}[u]}$, where $u$ is a root of the equation
$x^2-x+2=0$. Note that the extension $\mathbb{Q}[u]/\mathbb{Q}$ splits at $2$, whence the local orders
at $2$ containing a fixed copy of  $\enteri_2[u]$ in $\alge_2$ lie in a maximal path $\gamma$ in the Bruhat-Tits tree  
\cite[Prop. 4.2]{Eichler2}. Since the norm of $u$ is a uniformizing parameter, it is easy to see that,
 for any global embedding $\phi:\mathbb{Q}[u]\rightarrow\alge$,
conjugation by $\phi(u)$ defines a global automorphism of $\alge$ that shifts by $1$ the line of local orders
containing $\phi(u)$. Since $u$ is a unit outside of $2$, conjugation by $\phi(u)$ must stabilize any 
$\mathbb{Z}[1/2]$-order $\Da$ containing it. It follows that all orders in $\gamma$ are isomorphic and are 
connected to equivalent branches of the tree (Figure 3A).
We conclude that the corresponding vertex in the quotient graph looks like one of the graphs in Figure 3B, 
according to
whether there exist a global inversion on this path or not.
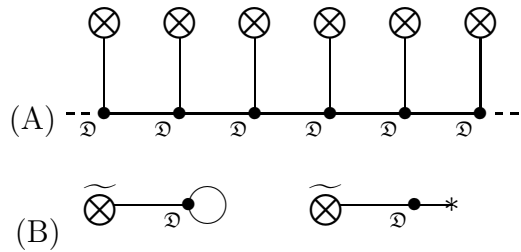
\begin{figure}
\unitlength 1mm 
\linethickness{0.4pt}
\ifx\plotpoint\undefined\newsavebox{\plotpoint}\fi 
\[\textnormal{(A)}\ 
\begin{picture}(70,24)(0,0)
\put(5,2){\line(1,0){50}}\put(0,2){\line(1,0){1}}\put(2,2){\line(1,0){1}}\put(57,2){\line(1,0){1}}\put(59,2){\line(1,0){1}}
\put(5,2){\makebox(0,0)[cc]{$\bullet$}}\put(5,2){\line(0,1){10}}\put(5,14){\makebox(0,0)[cc]{$\bigotimes$}}
\put(15,2){\makebox(0,0)[cc]{$\bullet$}}\put(15,2){\line(0,1){10}}\put(15,14){\makebox(0,0)[cc]{$\bigotimes$}}
\put(25,2){\makebox(0,0)[cc]{$\bullet$}}\put(25,2){\line(0,1){10}}\put(25,14){\makebox(0,0)[cc]{$\bigotimes$}}
\put(35,2){\makebox(0,0)[cc]{$\bullet$}}\put(35,2){\line(0,1){10}}\put(35,14){\makebox(0,0)[cc]{$\bigotimes$}}
\put(45,2){\makebox(0,0)[cc]{$\bullet$}}\put(45,2){\line(0,1){10}}\put(45,14){\makebox(0,0)[cc]{$\bigotimes$}}
\put(55,2){\makebox(0,0)[cc]{$\bullet$}}\put(55,2){\line(0,1){10}}\put(55,14){\makebox(0,0)[cc]{$\bigotimes$}}
\put(3,0){\makebox(0,0)[cc]{${}_\Da$}}\put(13,0){\makebox(0,0)[cc]{${}_\Da$}}
\put(23,0){\makebox(0,0)[cc]{${}_\Da$}}\put(33,0){\makebox(0,0)[cc]{${}_\Da$}}
\put(43,0){\makebox(0,0)[cc]{${}_\Da$}}\put(53,0){\makebox(0,0)[cc]{${}_\Da$}}
\end{picture}
\]
\[\textnormal{(B)}
\begin{picture}(70,12)(0,0)
\put(5.2,5){\makebox(0,0)[cc]{$\widetilde{\bigotimes}$}}
\put(7,5){\line(1,0){10}}\put(17,5){\makebox(0,0)[cc]{$\bullet$}}
\put(19.5,5){\circle{5}}
\put(35.2,5){\makebox(0,0)[cc]{$\widetilde{\bigotimes}$}}
\put(37,5){\line(1,0){10}}\put(47,5){\makebox(0,0)[cc]{$\bullet$}}
\put(47,5){\line(1,0){5}}\put(52,5){\makebox(0,0)[cc]{$*$}}

\put(15,3){\makebox(0,0)[cc]{${}_\Da$}}\put(45,3){\makebox(0,0)[cc]{${}_\Da$}}
\end{picture}
\] \caption{(A) Repetitive pattern arising from a global 
uniformizing parameter in a field splitting at $2$. (B) Possible shapes of the
quotient graph. Here $\widetilde{\bigotimes}$ denotes the image of the subgraph $\bigotimes$.}
\end{figure}
 It follows that the proof of Theorems 1.1-1.3 carry
over word by word to this case. 
\end{example}

\begin{example}
When $K$ is the function field of a smooth projective curve $X$ over a finite field $\mathbb{F}$, quotient graphs similar
to those described here can be used to classify $X$-orders in a quaternion $K$-algebra $\alge$ \cite[Thm. 1.1]{qgraphs}.
 These graphs are finite if and only if $\alge$ is a division algebra. They depend on the choice of a place
at infinity, playing the same role as the place  $2$ in our case. If the place at infinity is defined over $\mathbb{F}$,
 endpoints correspond to orders representing the maximal order of the subfield
$K\mathbb{L}$ where $\mathbb{L}$ is the unique quadratic extension of $\mathbb{F}$ \cite[Ex. 5.1]{qgraphs}.
The same argument given here can be used to show that this order is selective on large spinor genera.
\end{example}

\begin{example}
It follows easily from \cite[Prop. 2.4]{Eichler2} that if the algebra $\alge$ contains a
cubic root of unity and if $r$ is the largest distance of a vertex from the set of non-virtual endpoints, 
in the classifying graph at $2$ of maximal orders in $\alge$,
then $\Ha=\enteri+2^r\enteri[\omega]=\enteri[2^{r-1}\sqrt{-3}]$ is contained in every maximal order
in $\alge$. 
The methods in this work cannot be used to prove selectivity for the order $\enteri[\sqrt{-3}]$, since it is
easy to draw graphs where every vertex is at distance $1$ from an endpoint, as in Figure 4.
\begin{figure}
\unitlength 1mm 
\linethickness{0.4pt}
\ifx\plotpoint\undefined\newsavebox{\plotpoint}\fi 
\[
\begin{picture}(60,12)(0,0)
\put(0,5){\makebox(0,0)[cc]{$\bullet$}}
\put(5,5){\makebox(0,0)[cc]{$\bullet$}}
\put(5,10){\makebox(0,0)[cc]{$\bullet$}}
\put(0,5){\line(1,0){5}}\put(5,5){\line(0,1){5}}
\put(10,5){\makebox(0,0)[cc]{$\bullet$}}
\put(10,10){\makebox(0,0)[cc]{$\bullet$}}
\put(5,5){\line(1,0){5}}\put(10,5){\line(0,1){5}}
\put(15,5){\makebox(0,0)[cc]{$\bullet$}}
\put(15,10){\makebox(0,0)[cc]{$\bullet$}}
\put(10,5){\line(1,0){5}}\put(15,5){\line(0,1){5}}
\multiput(15,5)(1,0){20}{.}
\put(35,5){\makebox(0,0)[cc]{$\bullet$}}
\put(35,10){\makebox(0,0)[cc]{$\bullet$}}
\put(35,5){\line(1,0){5}}\put(35,5){\line(0,1){5}}
\put(40,5){\makebox(0,0)[cc]{$\bullet$}}
\put(40,10){\makebox(0,0)[cc]{$\bullet$}}
\put(40,5){\line(1,0){5}}\put(40,5){\line(0,1){5}}
\put(45,5){\makebox(0,0)[cc]{$\bullet$}}
\put(45,10){\makebox(0,0)[cc]{$\bullet$}}
\put(45,5){\line(1,0){5}}\put(45,5){\line(0,1){5}}
\put(50,5){\makebox(0,0)[cc]{$\bullet$}}
\end{picture}
\] 
\caption{The classifying graph of a conjectural large genus for which $\enteri[\sqrt{-3}]$ 
would not be selective.}
\end{figure}
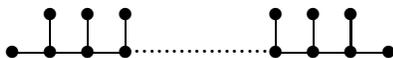
 Whether these graphs can actually
be classifying graphs for some family of spinor genera is still unknown to us. If such genera existed we would
have an example of an order that is non-selective for arbitrarily large spinor genera.

By the theorem of almost strong approximation \cite[Th. 3.1]{ae}, 
the order $\enteri[p^2\sqrt{-d}]$ is not selective in a fixed genus,
for almost every prime $p$, since this result implies that every pair of maximal orders are 
at most at a distance $2$ in the corresponding graph. Furthermore, the main result in
\cite{earnest} states:
\begin{quote}
Any positive definite ternary quadratic lattice of discriminant $d$ represents any integer of the form $ct^2$,
if $t$ is large enough, $(t,2d)=1$, and $c$ is primitively represented by the corresponding genus.
\end{quote}
This certainly implies a stronger version of the previous statements. All these results refer to large factors,
however, so the following question remains open, as far as we know: 
\begin{quote}
\textit{Is any order in a imaginary quadratic field selective for any large enough spinor genus?}
\end{quote}
As noted above, if the answer to this question is positive, this has significant implications in the possible shapes
 of the classifying graphs.
\end{example}

Let $\alge(p)$ be the unique quaternion algebra over $\mathbb{Q}$ that ramifies exactly at the places 
$p$ and $\infty$. We end this work by computing the quotient graph at $2$ of the genus of maximal orders
for all odd primes for which the class number of the algebra  $\alge(p)$ is $1$, that is when $p\in\{3,5,7,13\}$,
cf. \cite[\S3]{Ichi}.

When $\mathbb{Q}[\omega]$ embeds into $\alge(p)$, namely, when $p\in\{3,5\}$, the unique vertex
of the classifying graph has valency $1$, an therefore the graph looks like Figure 5A. Since $\mathbb{Q}[\sqrt{-7}]$
embeds into $\alge(7)$, it follows from the second example in this section that  the classifying graph
$\Gamma\backslash\mathfrak{T}_2$ has one of the shapes
in Figure 3B. Note that $\alge(7)=\left(\frac{-7,-1}{\mathbb{Q}}\right)$, whence for any embedding $\phi:\mathbb{Q}[u]\rightarrow\alge(7)$, where $u=\frac{\sqrt{-7}+1}2$, there in a pure quaternion $i\in\alge(7)$ satisfying $i^2=-1$ and $i\phi(u)i^{-1}=\overline{\phi(u)}$, whence conjugation by $i$ permutes the eigenvectors of $\phi(u)$.
 Since conjugation by $\phi(u)$ is a shift in the path $\gamma$ of maximal orders containing $\phi(u)$, as
in Figure 3A,
conjugation by $i$ is an inversion on that line. We conclude that $\Gamma\backslash\mathfrak{T}_2$
 looks like Figure 5B.
The same holds for $p=13$ if we note that $\alge(13)=\left(\frac{-7,-13}{\mathbb{Q}}\right)$,
and any pure quaternion satisfying $i^2=-13$ stabilizes any maximal $\{\infty,2\}$-order containing it, even if $i$ is not a $\{\infty,2\}$-unit,
since the maximal order at $13$ is unique, and $N(i)$ is a unit outside $13$. The result follows. 
\begin{figure}
\unitlength 1mm 
\linethickness{0.4pt}
\ifx\plotpoint\undefined\newsavebox{\plotpoint}\fi 
\[\textnormal{(A)}
\begin{picture}(35,12)(0,0)
\put(10,5){\makebox(0,0)[cc]{$\bullet$}}
\put(10,5){\line(1,0){10}}\put(20,5){\makebox(0,0)[cc]{$*$}}
\end{picture}
\textnormal{(B)}
\begin{picture}(35,12)(0,0)
\put(11,5){\makebox(0,0)[cc]{$\bullet$}}
\put(11,5){\line(1,0){9}}\put(20,5){\makebox(0,0)[cc]{$*$}}
\put(2,5){\line(1,0){9}}\put(2,5){\makebox(0,0)[cc]{$*$}}
\end{picture}
\] \caption{ Classifying graphs for the genus of 
maximal orders at $2$ for the algebra $\alge(p)$ when (A) $p\in\{3,5\}$. (B) $p\in\{7,13\}$.}
\end{figure}
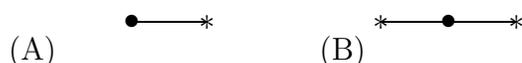

\end{document}